\documentclass[12pt]{amsart}
\usepackage[a4paper, margin=1.25in]{geometry}
\makeatletter
\@namedef{subjclassname@2020}{%
  \textup{2020} Mathematics Subject Classification}
\makeatother
%%%%%%%%%%%%%%%%%%%%%%%%%%%%%%%%%%%%%%%%%%%%%%%%%%%%%%%%%%%%%%%%%%%%%%%%%%%%%%%%%%%
\title[Hilbert Property]{Hilbert Property for Double Conic Bundles and del Pezzo Varieties}
\subjclass[2020]{14G05 (primary), 11G35, 14D10 (secondary).}
\author{Sam Streeter}
\address{Department of Mathematical Sciences, University of Bath, Claverton Down, Bath, BA2 7AY, United Kingdom}
\email{sjbs20@bath.ac.uk}
%%optional:  \curaddr{current address}%%
%%optional:  \urladdr{website address}%%
%%%%%%%%%%%%%%%%%%%%%%%%%%%%%%%%%%%%%%%%%%%%%%%%%%%%%%%%%%%%%%%%%%%%%%%%%%%%%%%%%%%
\usepackage{amsmath}
\usepackage{amssymb}
\usepackage{amsthm}
\usepackage{enumitem}
\usepackage{tikz-cd}
\usepackage{soul}
\usepackage[linktocpage=true]{hyperref}
\usepackage{mathrsfs}
%%%%%%%%%%%%%%%%%%%%%%%%%%%%%%%%%%%%%%%%%%%%%%%%%%%%%%%%%%%%%%%%%%%%%%%%%%%%%%%%%%%
\theoremstyle{definition}
\newtheorem{mydef}{Definition}
\newtheorem{note}[mydef]{Note}
\newtheorem{remark}[mydef]{Remark}

\theoremstyle{plain}
\newtheorem{theorem}[mydef]{Theorem}
\newtheorem{proposition}[mydef]{Proposition}
\newtheorem{lemma}[mydef]{Lemma}
\newtheorem{corollary}[mydef]{Corollary}
\newtheorem{conjecture}[mydef]{Conjecture}
%%%%%%%%%%%%%%%%%%%%%%%%%%%%%%%%%%%%%%%%%%%%%%%%%%%%%%%%%%%%%%%%%%%%%%%%%%%%%%%%%%%
\numberwithin{mydef}{section}
%%%%%%%%%%%%%%%%%%%%%%%%%%%%%%%%%%%%%%%%%%%%%%%%%%%%%%%%%%%%%%%%%%%%%%%%%%%%%%%%%%%
\let\originalleft\left
\let\originalright\right
\renewcommand{\left}{\mathopen{}\mathclose\bgroup\originalleft}
\renewcommand{\right}{\aftergroup\egroup\originalright}
%%%%%%%%%%%%%%%%%%%%%%%%%%%%%%%%%%%%%%%%%%%%%%%%%%%%%%%%%%%%%%%%%%%%%%%%%%%%%%%%%%%
\DeclareMathOperator{\ch}{char}

\DeclareMathOperator{\Gal}{Gal}

%%%%%%%%%%%%%%%%%%%%%%%%%%%%%%%%%%%%%%%%%%%%%%%%%%%%%%%%%%%%%%%%%%%%%%%%%%%%%%%%%%%
%%%%%%%%%%%%%%%%%%%%%%%%%%%%%%%%%%%%%%%%%%%%%%%%%%%%%%%%%%%%%%%%%%%%%%%%%%%%%%%%%%%
\begin{document}
%%%%%%%%%%%%%%%%%%%%%%%%%%%%%%%%%%%%%%%%%%%%%%%%%%%%%%%%%%%%%%%%%%%%%%%%%%%%%%%%%%%
\begin{abstract}
In this paper we prove that, over a Hilbertian ground field, surfaces with two conic fibrations whose fibres have non-zero intersection product have the Hilbert property. We then give an application of this result, namely the verification of the Hilbert property for certain del Pezzo varieties.
\end{abstract}
\maketitle
%%%%%%%%%%%%%%%%%%%%%%%%%%%%%%%%%%%%%%%%%%%%%%%%%%%%%%%%%%%%%%%%%%%%%%%%%%%%%%%%%%%
\setcounter{tocdepth}{1}
\tableofcontents
%%%%%%%%%%%%%%%%%%%%%%%%%%%%%%%%%%%%%%%%%%%%%%%%%%%%%%%%%%%%%%%%%%%%%%%%%%%%%%%%%%%
\section{Introduction} \label{sectionI}
%%%%
\subsection{Motivation} \label{sectionM}
A common goal in algebraic geometry is to describe the abundance of rational points on a given algebraic variety. In particular, when a variety possesses infinitely many rational points, there are various notions to describe their abundance, which are either of a quantitative, analytic flavour, or of a qualitative, topological one. In the latter scenario, we often consider density in terms of certain topologies on the variety, the most natural being the Zariski topology.

Here, we describe and study a notion of abundance which is stronger than Zariski density, namely the \emph{Hilbert property}: briefly, a variety $X/k$ is said to have the Hilbert property if the set $X\left(k\right)$ is not thin (we give the definition of thin sets in Section \ref{sectionB}). In fact, this particular notion of abundance of rational points is connected to the so-called \emph{inverse Galois problem}, which is to determine whether every finite group $G$ is realisable as the Galois group of a Galois extension of a given field, by the following conjecture of Colliot-Th\'el\`ene and Sansuc.
\begin{conjecture} \emph{(\cite[p.~190]{PHSUFTA})} \label{conjecture}
Let $X$ be a unirational variety over a number field. Then $X$ has the Hilbert property.
\end{conjecture}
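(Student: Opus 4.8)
Conjecture~\ref{conjecture} is open in general, so the following is a sketch of the natural line of attack together with the point at which it stalls, rather than a proof. The easy part I would dispose of first: for $X=\mathbb{P}^n$ the assertion is Hilbert's irreducibility theorem, and since the Hilbert property is insensitive to proper birational modification, it follows for every \emph{rational} $X$. The real content is thus the strictly unirational case — conic bundles, del Pezzo surfaces of low degree, unirational threefolds, and the like — which is precisely the territory entered below.

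The obvious strategy is to transport non-thinness along a unirational parametrisation. Given a dominant rational map $\varphi\colon\mathbb{P}^n\dashrightarrow X$ and a putative covering $X(k)\subseteq\bigcup_i\pi_i\bigl(Y_i(k)\bigr)$ by thin sets, one would pull each $Y_i\to X$ back along $\varphi$, select a component dominating $\mathbb{P}^n$, and attempt to contradict Hilbert irreducibility downstairs. I expect this to be the main obstacle, for it genuinely fails: an irreducible cover of $X$ of degree $\ge 2$ can acquire a rational section — indeed split completely — after base change to $\mathbb{P}^n$, because the embedding $k(X)\hookrightarrow k(\mathbb{P}^n)$ need not keep the relevant field extension irreducible. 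Already $X=\mathbb{P}^1$ with $\varphi\colon t\mapsto t^2$ illustrates this, the squaring cover of $\mathbb{P}^1$ pulling back to a split double cover. Hence thin sets are not functorial under dominant maps, and one is forced to understand exactly which covers of $X$ split over $k(\mathbb{P}^n)$.

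To circumvent this I would pass to the Galois closure of $\varphi$, obtaining $Z\to X$ Galois with group $G$; covers surviving the pullback to $Z$ then correspond to fixed-point-free transitive $G$-sets, while the completely split part of the hypothetical covering must be disposed of by a separate Zariski-density argument. Two things keep this from going through unconditionally: the Galois closure of a unirational parametrisation need not be rational, nor even be known to have the Hilbert property; and bounding the split part amounts to controlling rational points on auxiliary covers of $X$, a task no easier than the one at hand — this being exactly the mechanism by which Conjecture~\ref{conjecture} implies instances of the inverse Galois problem. I would therefore, in practice, abandon the frontal assault and argue class by class: use an explicit fibration of $X$ over a base already known to have the Hilbert property, together with a fibrewise analysis guaranteeing that ``enough'' fibres contribute non-thin sets of $k$-points. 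This is the route followed here, first for surfaces carrying two conic fibrations whose fibres have non-zero intersection product, and then for the del Pezzo varieties that reduce to them.
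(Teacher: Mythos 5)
You have correctly recognised that this statement is an open conjecture of Colliot-Th\'el\`ene and Sansuc, recorded in the paper without proof, so there is no argument in the paper to compare against. Your diagnosis of why the naive transport of thinness along a unirational parametrisation fails (covers splitting after base change, thin sets not being functorial under dominant maps) is sound, and your closing description of the fallback — fibring over a base with the Hilbert property and analysing fibres, as in Theorem \ref{thm1} via Theorem \ref{ovhttheorem} — accurately reflects what the paper actually does for its special cases. Nothing further is required here.
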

As noted in the proof of \cite[Thm.~3.5.9,~p.~30]{TGT}, the truth of this conjecture implies a positive solution to the inverse Galois problem over $\mathbb{Q}$. 

The Hilbert property was investigated by Corvaja and Zannier in \cite{OHPFGAV}, where the authors proved that the Fermat quartic $$x^4 + y^4 = z^4 + w^4 \subset \mathbb{P}^3$$ over $\mathbb{Q}$ has the Hilbert property. This surface is not geometrically unirational, hence the converse of Conjecture \ref{conjecture} is false. Demeio later proved in \cite{NRVWHP} and \cite{EFHP} that, over a number field $k$, certain double elliptic surfaces and all cubic hypersurfaces with a $k$-rational point have the Hilbert property. Further, it follows from the work of Salberger and Skorobogatov in \cite{WASDTQF} that smooth intersections of two quadrics with a rational point over a number field have the Hilbert property. Fibrations play an important role in proving these results, and the role of fibrations in studying the Hilbert property is emphasised by the main theorem of Bary-Soroker, Fehm and Petersen in \cite{OVHT} (Theorem \ref{ovhttheorem}), which allows us to reduce questions about the rational points of a variety to the fibres of some morphism from it. This result is crucial to the methods in this paper, where we employ conic fibrations. Conic fibrations were also used by Coccia in \cite{HPIPASCS} to prove that affine smooth cubic surfaces over number fields satisfy a version of the Hilbert property for $S$-integral points after a finite field extension.
\subsection{Results} \label{sectionR}
The main result of this paper is the following theorem.
\begin{theorem} \label{thm1}
Let $S$ be a smooth projective surface defined over a Hilbertian field $k$ possessing two conic fibrations $\pi_i: S \rightarrow \mathbb{P}^1$, $i=1,2$ such that two fibres from different fibrations have non-zero intersection product, and suppose that there exists $P_0 \in S\left(k\right)$ lying on a smooth fibre of one of the conic fibrations. Then $S$ has the Hilbert property.
\end{theorem}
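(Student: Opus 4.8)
The plan is to deduce the result from the Bary-Soroker--Fehm--Petersen theorem (Theorem~\ref{ovhttheorem}), after first manufacturing a non-thin family of rational fibres by a fibre-product construction. After relabelling the two fibrations we may assume that $P_0$ lies on a smooth fibre of $\pi_1$, so that $C_0 := \pi_1^{-1}(\pi_1(P_0))$ is a smooth conic over $k$ carrying the rational point $P_0$, and hence $C_0 \cong \mathbb{P}^1_k$. The hypothesis on intersection products is exactly what forces $C_0$ to be \emph{not} contained in a fibre of $\pi_2$: if it were, then $C_0$ would be disjoint from all other fibres of $\pi_2$ and the fibrewise product $F_1 \cdot F_2$ would vanish. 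Thus $\phi := \pi_2|_{C_0} \colon C_0 \to \mathbb{P}^1$ is a non-constant, hence finite surjective, morphism.

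The crucial step is to pass to the base change $\widetilde{S} := S \times_{\pi_2,\, \mathbb{P}^1,\, \phi} C_0$, with its two projections $\mathrm{pr}_1 \colon \widetilde{S} \to S$ and $\mathrm{pr}_2 \colon \widetilde{S} \to C_0$. The map $\mathrm{pr}_2$ realises $\widetilde{S}$ as a conic bundle over $C_0 \cong \mathbb{P}^1_k$, and $Q \mapsto (Q, Q)$ (using the inclusion $C_0 \hookrightarrow S$) is a section of $\mathrm{pr}_2$; since $\phi$ is dominant, the generic fibre of $\mathrm{pr}_2$ is the generic fibre of $\pi_2$ base-changed to $k(C_0) \cong k(\mathbb{P}^1)$, which is a smooth conic now equipped with a rational point, hence $\cong \mathbb{P}^1$. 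Therefore $\widetilde{S}$ is $k$-rational, and in particular (replacing $\widetilde{S}$ by a smooth projective model if necessary) it has the Hilbert property.

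Finally, I would transport this along $\mathrm{pr}_1$. Set $\widetilde{\pi}_1 := \pi_1 \circ \mathrm{pr}_1 \colon \widetilde{S} \to \mathbb{P}^1$. Since $\widetilde{S}(k)$ is contained in $\widetilde{\pi}_1^{-1}(R)$ for $R := \{a \in \mathbb{P}^1(k) : \widetilde{\pi}_1^{-1}(a)(k) \neq \emptyset\}$, since preimages of thin sets under morphisms are thin, and since $\widetilde{S}$ has the Hilbert property, the set $R$ cannot be thin. For each $a \in R$, a rational point of $\widetilde{\pi}_1^{-1}(a)$ projects under $\mathrm{pr}_1$ to a rational point of $\pi_1^{-1}(a)$; discarding the finitely many $a$ over which $\pi_1$ is not smooth, we get $\pi_1^{-1}(a) \cong \mathbb{P}^1_k$ — hence with the Hilbert property — for $a$ in a non-thin subset of $\mathbb{P}^1(k)$. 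As $\pi_1 \colon S \to \mathbb{P}^1$ is dominant with geometrically integral generic fibre (a smooth conic) and $\mathbb{P}^1$ has the Hilbert property, Theorem~\ref{ovhttheorem} now gives that $S$ has the Hilbert property.

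The only genuinely non-formal ingredient is the construction of $\widetilde{S}$: a multisection of $\pi_2$ which happens to be isomorphic to $\mathbb{P}^1$ — produced from the single rational point $P_0$ via the \emph{first} fibration — gives, after base change, a $k$-rational cover of $S$ that still sees $\pi_1$. I expect the part requiring the most care to be the verification that $\widetilde{S}$ really is a $k$-rational variety (handling non-reducedness and singularities arising from the ramification of $\phi$ and the degenerate fibres of $\pi_2$), together with checking the precise hypotheses of Theorem~\ref{ovhttheorem}; everything else is bookkeeping with thin sets.
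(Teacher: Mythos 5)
Your construction of $\widetilde{S}$ is correct and attractive: the diagonal section does give the generic fibre of $\mathrm{pr}_2$ a $k\left(C_0\right)$-point, so $\widetilde{S}$ is indeed birational to $\mathbb{P}^1 \times C_0 \cong \mathbb{P}^1 \times \mathbb{P}^1$ and has the Hilbert property. The gap is in the final descent step. The assertion ``preimages of thin sets under morphisms are thin'' is false in general: for a finite morphism $f: \mathbb{P}^1 \rightarrow \mathbb{P}^1$ of degree $2$, the set $f\left(\mathbb{P}^1\left(k\right)\right)$ is thin by definition, yet its preimage contains all of $\mathbb{P}^1\left(k\right)$. The correct statement (essentially \cite[Prop.~3.2.1]{TGT}) requires the morphism to have \emph{geometrically irreducible} generic fibre; only then does a non-thin set upstairs have non-thin image. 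For your $\widetilde{\pi}_1 = \pi_1 \circ \mathrm{pr}_1$, the generic fibre is the fibre product $\pi_1^{-1}\left(\eta\right) \times_{\mathbb{P}^1} C_0$, where both factors map to $\mathbb{P}^1$ via $\pi_2$ with the same degree $F_1 \cdot F_2$. Its geometric irreducibility is exactly the linear-disjointness question that Lemma \ref{fplemma} addresses, and it is not automatic: it fails precisely when the two covers share a nontrivial common subcover over $\overline{k\left(t\right)}$, and ruling this out requires showing that the branch loci of $\pi_2|_{\pi_1^{-1}\left(\eta\right)}$ and of $\phi = \pi_2|_{C_0}$ are disjoint. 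You flag ``non-reducedness and singularities'' as the delicate point, but the real danger is \emph{reducibility} of this fibre product; if it is reducible with a component mapping to $\widetilde{S}$... rather, if $\widetilde{S} \times_{\mathbb{P}^1} C$ acquires a degree-one component over $\widetilde{S}$ for one of the covers $C$ witnessing thinness of $R$, the non-thinness of $\widetilde{S}\left(k\right)$ tells you nothing about $R$.

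This is where your route diverges essentially from the paper's and where the paper does its real work. The paper argues by contradiction: given putative covers $\varphi_i: C_i \rightarrow \mathbb{P}^1$ exhibiting thinness, it \emph{then} chooses a smooth fibre $\pi_1^{-1}\left(P\right)$ with a rational point whose image under $\pi_2$ avoids the branch points of all the $\varphi_i$ (possible because only finitely many fibres are excluded), so that Lemma \ref{fplemma} applies to each fibre product $\pi_1^{-1}\left(P\right) \times_{\mathbb{P}^1} C_i$. Your argument fixes $C_0$ once and for all, before any covers appear, and so has no such adaptive freedom; you must instead prove unconditionally that the branch locus of $\pi_2$ restricted to the generic fibre of $\pi_1$ contains no ``constant'' point, i.e.\ no point of $\mathbb{P}^1\left(\overline{k}\right)$ is a branch point of $\pi_2|_{\pi_1^{-1}\left(s\right)}$ for general $s$. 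I believe this can be extracted from an analysis of the degeneracy locus of $\left(\pi_1, \pi_2\right): S \rightarrow \mathbb{P}^1 \times \mathbb{P}^1$ (a fibre component along which the two fibrations are everywhere tangent would have to lie in a single fibre of each, which the intersection hypothesis and Zariski's lemma forbid), so your approach is likely salvageable — but as written, the entire difficulty of the theorem has been absorbed into an unproved (and, as stated, false) claim about thin sets.
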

\begin{remark}
We observe that these surfaces are \emph{geometrically rational}, i.e.\ they are rational surfaces over the algebraic closure of their field of definition.
\end{remark}
In particular, we obtain the following result for del Pezzo surfaces.
\begin{theorem} \label{thm2}
Let $S$ be a del Pezzo surface of degree $d$ defined over a Hilbertian field $k$ with $S\left(k\right) \neq \emptyset$. Suppose that one of the following holds:
\begin{enumerate} [label=(\alph*)]
\item $d \geq 4$;
\item $d=3$ and there exists a line $L$ on $S$ (under its anticanonical embedding);
\item $d \in \{1,2\}$ and $S$ possesses a conic fibration.
\end{enumerate} 
Then $S$ has the Hilbert property.
\end{theorem}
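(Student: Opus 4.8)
The plan is to deduce Theorem \ref{thm2} from Theorem \ref{thm1} together with the fact that the Hilbert property is a $k$-birational invariant of smooth projective surfaces. First I would record this invariance in the form we need: if $\sigma\colon\tilde S\to S$ is the blow-up of a smooth projective surface at a closed point, then a subset $A\subseteq S(k)$ is thin if and only if $\sigma^{-1}(A)\cap\tilde S(k)$ is thin, since preimages of thin sets are thin while, conversely, $k$-points of $S$ avoid any closed point of degree $>1$, so $\sigma$ restricts to an isomorphism on a dense open identifying $S(k)$ (up to removing at most one point) with a subset of $\tilde S(k)$; hence $S$ has the Hilbert property if and only if $\tilde S$ does, and by factoring birational maps of surfaces into blow-ups the Hilbert property depends only on the $k$-birational class. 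We may therefore freely replace $S$ by any smooth projective $k$-birational model, and in particular blow up $k$-rational points.

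Granting this, the case $d\geq 5$ is immediate: such an $S$ with a $k$-point is $k$-rational by classical results, hence $k$-birational to $\mathbb{P}^1\times\mathbb{P}^1$, which has the Hilbert property by Theorem \ref{thm1} (its two rulings are conic fibrations whose fibres meet in one point, all fibres are smooth, and every $k$-point lies on such fibres); so $S$ does too. For $d=4$ I would blow up a $k$-point in general position — one exists since the rational points of a degree-$4$ del Pezzo surface with a $k$-point are Zariski dense — obtaining a cubic surface in which the exceptional curve is a line, thereby reducing case (a) to case (b). In case (b) the pencil of hyperplanes through $L$ cuts $S$ in $L$ together with a residual conic, giving a conic fibration $\pi_1\colon S\to\mathbb{P}^1$ with fibre class $-K_S-L$. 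Thus in all three cases we are reduced to the situation: $S$ is a del Pezzo surface of degree $d\in\{1,2,3\}$ over $k$ carrying a conic fibration $\pi_1$ with fibre class $F$, and $S(k)\neq\emptyset$.

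It remains to manufacture a second conic fibration whose fibres meet those of $\pi_1$. For $d=2$ the class $2(-K_S)-F$ works on $S$ itself: one checks $(2(-K_S)-F)^2=0$ and $(2(-K_S)-F)\cdot(-K_S)=2$, nefness follows by pairing against the $(-1)$-curves using that $F\cdot C\leq 2$ for every line $C$ on a degree-$2$ del Pezzo surface, and $F\cdot(2(-K_S)-F)=4\neq 0$; an analogous explicit multiple $m(-K_S)-F$ serves when $d=1$. For $d=3$ no such class exists on $S$ itself in general, so I would first blow up a $k$-point $P$ lying on a smooth fibre of $\pi_1$ and in general position — such $P$ exists since a smooth fibre through a given $k$-point is a conic with a $k$-point, hence $\cong\mathbb{P}^1_k$, and one checks any such $P$ automatically lies off all $27$ lines — reaching a degree-$2$ del Pezzo surface $S'$ on which $\pi^{\ast}F$ and $2(-K_{S'})-\pi^{\ast}F$ are conic-fibration classes with intersection $4$. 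In every case the two classes are built from Galois-invariant data, so the fibrations descend to $k$; a $k$-point on a smooth fibre of one of them is then obtained by taking a $k$-point of a suitable fibre component, for instance the exceptional curve $E_P\cong\mathbb{P}^1_k$, which is a bisection of the new fibration, with image avoiding the finitely many critical values. Theorem \ref{thm1} now yields the Hilbert property for the model, hence for the original $S$.

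The part I expect to be most delicate is twofold. First, verifying that the second class constructed above is honestly nef — so that it defines a conic fibration rather than a class with a fixed component — forces the blown-up point away from an explicit finite union of curves and involves a short intersection computation sensitive to $d$; the degree-$1$ case is the touchiest, since the relevant surface is no longer del Pezzo and its nef cone is correspondingly larger. Second, extracting a $k$-point on a smooth fibre from the bare hypothesis $S(k)\neq\emptyset$ needs care when the given point is trapped on singular fibres; there one uses that it then lies on a $k$-rational component of a reducible fibre, a genus-$0$ curve over $k$ with a $k$-point and hence $\cong\mathbb{P}^1_k$, which is a multisection of the other fibration and so furnishes infinitely many rational points from which to restart the construction. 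These are where the real content lies; the remainder is bookkeeping with divisor classes on del Pezzo surfaces.
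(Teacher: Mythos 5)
Your overall route is the same as the paper's: reduce (a) to (b) by blowing up a suitably general rational point, use the pencil of hyperplanes through $L$ in case (b) to obtain a conic fibration and descend to lower degree, construct a second conic fibration from the class $-\tfrac{4}{d}K_S-F$ (your $2(-K_S)-F$ for $d=2$ and $4(-K_S)-F$ for $d=1$ are exactly this class), and then apply Theorem \ref{thm1}. One step is under-justified but repairable: nefness of $-\tfrac{4}{d}K_S-F$ does not by itself produce a conic fibration; you also need the linear system to be base-point-free with irreducible general member of arithmetic genus zero, which the paper's Lemma \ref{dcslemma} establishes via Riemann--Roch and the observation that a reducible member must split into two $(-1)$-curves, of which there are only finitely many.

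The genuine gap is in extracting a rational point on a smooth fibre from the bare hypothesis $S(k)\neq\emptyset$. Your fallback --- that a point lying only on singular fibres must lie on a $k$-rational component of a reducible fibre --- is false: every singular fibre of a conic fibration on a del Pezzo surface is a union of two $(-1)$-curves, and these may be conjugate over $k$ with the given rational point equal to their intersection point, in which case no component is defined over $k$ and the fibre yields no further rational points. This is exactly the obstruction the paper removes by citing substantive external results: the existence of a rational point off the exceptional curves for degrees $3$ and $4$ \cite[Thm.~30.1,~p.~162]{CF}, and the unirationality of degree $1$ and $2$ del Pezzo conic bundles with a rational point \cite[Cor.~8,~p.~919]{QFECUD1CB}, which gives Zariski density of $S(k)$ and hence a point on a smooth fibre. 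Without an input of this kind your argument does not start in cases (b) and (c). (Your parenthetical claim that a point on a smooth fibre of the conic fibration on a cubic surface ``automatically lies off all $27$ lines'' is also false, since the smooth fibre in class $-K_S-L$ meets every line disjoint from $L$; this one is harmless, as a generic choice of point on the fibre $\cong\mathbb{P}^1_k$ fixes it.)
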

By induction with Theorem \ref{thm2} as the base case, we will also prove the following result for del Pezzo varieties (which we define in Section \ref{sectionDPV}).
\begin{theorem} \label{thm3}
Let $\left(X,H\right)$ be a smooth del Pezzo variety of degree $d$ and dimension $n \geq 2$ defined over a Hilbertian ground field $k$ with $X\left(k\right) \neq \emptyset$. Suppose that one of the following holds:
\begin{enumerate} [label=(\alph*)]
\item $d \geq 4$;
\item $d=3$ and there exists a line $L$ on $X$ (under its anticanonical embedding).
\end{enumerate}
Then $X$ has the Hilbert property. 
\end{theorem}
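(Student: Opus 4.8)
The plan is to induct on the dimension $n$, with Theorem \ref{thm2} as the base case $n=2$ --- for $n=2$ the conditions (a) and (b) of Theorem \ref{thm3} are exactly those of Theorem \ref{thm2} --- and with Fujita's theorem on hyperplane sections of del Pezzo varieties for the inductive step. So assume the result in dimension $n-1 \geq 2$, and let $\left(X,H\right)$ be a smooth del Pezzo variety of dimension $n$ and degree $d \geq 3$ with $X\left(k\right) \neq \emptyset$; in case (b) fix a line $L \subseteq X$ defined over $k$ (so $L \cong \mathbb{P}^1_k$, since $H|_L$ has degree $1$), and in case (a) fix a point $P_0 \in X\left(k\right)$. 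I would choose a general pencil $\mathcal{P} \subseteq |H|$ subject to the constraint that every member passes through $P_0$, in case (a), respectively contains $L$, in case (b) --- in the latter case this still leaves a positive-dimensional linear subsystem, since containing $L$ imposes at most two conditions on $|H|$, which has dimension $d+n-2 \geq 4$. Then I would let $\beta\colon \tilde X \to X$ be the blow-up along the base locus $Z := \mathrm{Bs}\left(\mathcal{P}\right)$, and let $f\colon \tilde X \to \mathbb{P}^1$ be the resulting morphism.

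For a sufficiently general such $\mathcal{P}$, standard Bertini-type arguments (together with Fujita's classification) should yield a dense open $U \subseteq \mathbb{P}^1$ such that, for every $t \in U$, the member $X_t \in \mathcal{P}$ is a smooth del Pezzo variety of dimension $n-1$ and degree $d$ --- with polarisation $H|_{X_t}$, since adjunction gives $-K_{X_t} = \left(\left(n-1\right)H - H\right)|_{X_t} = \left(n-2\right)H|_{X_t}$ --- and $\beta$ restricts to an isomorphism $f^{-1}\left(t\right) \xrightarrow{\sim} X_t$ (a standard property of the blow-up resolving a pencil: the restriction is a birational bijection onto the normal variety $X_t$). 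In case (a) we have $P_0 \in Z \subseteq X_t$, and in case (b) we have $L \subseteq X_t$; so in both cases $X_t\left(k\right) \neq \emptyset$, and in case (b) $X_t$ contains a line. Therefore the inductive hypothesis --- which for $n = 3$ is Theorem \ref{thm2} --- applies to each $X_t$ with $t \in U$, showing that $X_t \cong f^{-1}\left(t\right)$ has the Hilbert property; moreover such a fibre is geometrically irreducible, being smooth and Fano.

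Finally I would apply Theorem \ref{ovhttheorem} to the dominant morphism $f\colon \tilde X \to \mathbb{P}^1$: the base $\mathbb{P}^1$ has the Hilbert property because $k$ is Hilbertian, the set $U\left(k\right) \subseteq \mathbb{P}^1\left(k\right)$ is non-thin because its complement is finite, and the fibres over the points of $U\left(k\right)$ are geometrically irreducible and of Hilbert type; the theorem then gives that $\tilde X$ has the Hilbert property. Since $\beta$ is a dominant morphism over $k$, the Hilbert property passes to $X$ (were $X\left(k\right)$ thin, then $\tilde X\left(k\right) \subseteq \beta^{-1}\left(X\left(k\right)\right)$ would be thin, a contradiction, as preimages of thin sets under dominant morphisms are thin), completing the induction. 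I expect the main obstacle to lie in the geometric input of the second paragraph: one must verify that a general pencil subject to the stated constraint really does have general member a \emph{smooth} del Pezzo variety of dimension $n-1$ --- in case (a) requiring the general member of the subsystem of $|H|$ through $P_0$ to be smooth at $P_0$, and in case (b) requiring the general member of the subsystem through $L$ to be smooth along $L$ (equivalently, that the pencil meets the locus of hyperplanes tangent to $X$ along $L$ in only finitely many points) --- together with the check that the generic fibre of $f$ is genuinely a del Pezzo variety rather than a singular or non-Fano degeneration; in positive characteristic one must also ensure generic smoothness of $f$.
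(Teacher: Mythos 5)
Your proposal is correct and follows essentially the same route as the paper: induction on dimension with Theorem \ref{thm2} as the base case, a pencil of hyperplane sections through the rational point (case (a)) or containing the line (case (b)) whose general member is, by Bertini and adjunction, a smooth del Pezzo variety of dimension $n-1$ and the same degree with a rational point, followed by an application of Theorem \ref{ovhttheorem}. The only cosmetic difference is that the paper resolves the pencil via Hironaka and transfers the rational point to the strict-transform fibres using the Lang--Nishimura theorem, whereas you blow up the base locus and identify the general fibre with the corresponding member directly; both are standard and adequate here.
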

Examples of del Pezzo varieties include cubic hypersurfaces (degree $3$) and intersections of two quadric hypersurfaces (degree $4$).
%%%%
\subsection{Conventions} \label{subsectionC}
Throughout this paper, we will work over an arbitrary ground field $k$ of characteristic $0$, and all algebraic varieties shall be understood to be integral over the ground field $k$ and quasi-projective.

We include the following definition for clarity.
\begin{mydef}
Let $S$ be a smooth projective surface. A \emph{conic fibration} of $S$ is a morphism $\pi: S \rightarrow \mathbb{P}^1$ such that all fibres of $\pi$ are isomorphic to plane conics. We call a surface with a conic fibration a \emph{conic bundle} (over $\mathbb{P}^1$).  
\end{mydef}
\begin{remark}
If $\pi: S\rightarrow \mathbb{P}^1$ is a conic fibration, then by the adjunction formula (see \cite[Lem.~2.1,~p.~412]{RPBHGCBS}), the fibre $\pi^{-1}\left(P\right)$ is reduced for all $P \in \mathbb{P}^1$.
\end{remark}
%%%%%%%%%%%%%%%%%%%%%%%%%%%%%%%%%%%%%%%%%%%%%%%%%%%%%%%%%%%%%%%%%%%%%%%%%%%%%%%%%%%
\section{Background} \label{sectionB}
We now define thin sets and the Hilbert property. We recall our conventions that $\ch k = 0$ and that algebraic varieties are integral and quasi-projective. We begin by giving the definition of thin sets from \cite[Ch.~3,~\S1]{TGT}, amending the notation and terminology slightly.
\begin{mydef}
Let $V$ be a variety over $k$.

A subset $A \subset V\left(k\right)$ is of \emph{type I} if there is a proper Zariski closed subset $W \subset V$ with $A \subset W\left(k\right)$. 

A subset $A \subset V\left(k\right)$ is of \emph{type II} if there is a variety $V'$ with $\textrm{dim}V' = \textrm{dim}V$ and a generically finite dominant morphism $\phi: V' \rightarrow V$ of degree $\geq 2$ with $A \subset \phi\left(V'\left(k\right)\right)$.

A subset $A \subset V\left(k\right)$ is \emph{thin} if it is contained in a finite union of subsets of $V\left(k\right)$ of types I and II.  
\end{mydef}
\begin{remark}
Serre mentions (see \cite[p.~20]{TGT}) that the variety $V'$ in the definition of type II thin sets can be taken to be geometrically irreducible, as otherwise we have $V'\left(k\right) \subset W'\left(k\right)$ for some Zariski closed proper subset $W'$ of $V'$.
\end{remark}
\begin{remark}
Further to the above remark, we can ask that $V'$ be normal and that $\phi$ be finite. Indeed, given a variety $V'$ and a generically finite dominant morphism $\phi: V' \rightarrow V$, the function field $K\left(V'\right)$ of $V'$ is a finite extension of the function field $K\left(V\right)$ of $V$, and we can consider the \emph{normalisation of $V$ in $K\left(V'\right)$} (see \cite[~Def.\,4.1.24,~p.~120]{AGAC}). This is a normal scheme $\widetilde{V'}$ with $K(\widetilde{V'}) = K\left(V'\right)$ together with an integral morphism $\widetilde{\phi}: \widetilde{V'} \rightarrow V$. By \cite[~Prop.~4.1.27,~p.~121]{AGAC}, we have that $\widetilde{\phi}$ is finite and $\widetilde{V'}$ is an algebraic variety. Moreover, by the universal property of normalisation, $\phi: V' \rightarrow V$ factors uniquely through $\widetilde{\phi}$:
\begin{equation*}
\begin{tikzcd}
& & \widetilde{V'} \arrow[d, "\widetilde{\phi}"] \\
& V' \arrow[r, "\phi"] \arrow[ru, dotted, "\exists !"] & V.
\end{tikzcd}
\end{equation*}
In particular, we see that $\phi\left(V'\left(k\right)\right) \subset \widetilde{\phi}(\widetilde{V'}\left(k\right))$, so replacing the pair $\left(V', \phi\right)$ by $(\widetilde{V'}, \widetilde{\phi})$ allows us to demand these stronger requirements for type II thin sets.
\end{remark}
\begin{mydef} \label{hpdef}
Let $V$ be an algebraic variety over a field $k$. We say that $V$ has the \emph{Hilbert property} (over $k$) if the set $V\left(k\right)$ is not thin.
\end{mydef}
\begin{note}
If $V$ has the Hilbert property over $k$, then $V\left(k\right)$ is Zariski dense. Also, the Hilbert property is a birational invariant.
\end{note}
\begin{mydef}
We say that a field $k$ of characteristic zero is \emph{Hilbertian} if there exists a variety $V/k$ such that $V$ has the Hilbert property over $k$.
\end{mydef}
\begin{remark} \label{hpremark}
It can be shown that the field $k$ is Hilbertian if and only if $\mathbb{P}^1$ has the Hilbert property over $k$ (see \cite[Exercise~3.1.1,~p.~20]{TGT}).
\end{remark}
We will also make use of the following well-known result for fibre products of curves over $\mathbb{P}^1$, a proof of which we give for completeness.
\begin{lemma} \label{fplemma}
Let $C$ and $D$ be two regular, geometrically irreducible curves over a field $k$ equipped with non-constant morphisms $\phi: C \rightarrow \mathbb{P}^1$ and $\psi: D \rightarrow \mathbb{P}^1$ having disjoint branch loci. Then $C \times_{\mathbb{P}^1} D$ is a regular, geometrically irreducible curve.
\end{lemma}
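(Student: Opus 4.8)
The plan is to handle the two assertions --- regularity and geometric irreducibility --- separately, with essentially all of the work going into the second.

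For regularity I would argue one point at a time. A non-constant morphism from a regular curve to $\mathbb{P}^1$ is flat (over each closed point of $\mathbb{P}^1$ it is a torsion-free, hence flat, module over a discrete valuation ring), so $\phi$, $\psi$, and therefore the projection $X \to D$, where $X := C \times_{\mathbb{P}^1} D$, are all flat. Let $x \in X$ lie over $t \in \mathbb{P}^1$. Since the branch loci are disjoint, $t$ lies outside the branch locus of $\phi$ or outside that of $\psi$; in the first case $\phi$ is étale at the image of $x$ in $C$, so $X \to D$, being a base change of $\phi$, is étale at $x$, whence $X$ is regular at $x$ because $D$ is; the second case is symmetric. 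Thus $X$ is regular, hence reduced, and every point has a neighbourhood étale over $C$ or over $D$, so $X$ is $1$-dimensional and its connected components coincide with its irreducible components.

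For geometric irreducibility I would first reduce to the case $k = \overline{k}$: the formation of $X$ commutes with the base change to $\overline{k}$, the curves $C_{\overline{k}}$ and $D_{\overline{k}}$ remain regular and irreducible, the branch loci of $\phi_{\overline{k}}$ and $\psi_{\overline{k}}$ are the base changes of those of $\phi$ and $\psi$ and hence stay disjoint, and $X$ is geometrically irreducible exactly when $X_{\overline{k}}$ is irreducible. So assume $k$ algebraically closed. Since $X$ is reduced and flat over the integral base $\mathbb{P}^1$, each irreducible component of $X$ dominates $\mathbb{P}^1$, so the components of $X$ correspond bijectively to the points of the generic fibre $X_{\eta} = \Spec\left(K(C) \otimes_{k(t)} K(D)\right)$, where $k(t) = K\left(\mathbb{P}^1\right)$; as $\ch k = 0$ this is a product of finite field extensions of $k(t)$, and it suffices to show it is a single field, equivalently that $K(C)$ and $K(D)$ are linearly disjoint over $k(t)$. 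To prove this, let $\widehat{C} \to \mathbb{P}^1$ and $\widehat{D} \to \mathbb{P}^1$ be the Galois closures of $\phi$ and $\psi$, with groups $G_C$ and $G_D$. A prime of $k(t)$ ramifies in a finite separable extension if and only if it ramifies in the Galois closure, so $\widehat{C} \to \mathbb{P}^1$ and $\widehat{D} \to \mathbb{P}^1$ have the same (disjoint) branch loci as $\phi$ and $\psi$. A prime ramifying in $K(\widehat{C}) \cap K(\widehat{D})$ would ramify in both $K(\widehat{C})$ and $K(\widehat{D})$, hence lie in the intersection of the two branch loci, which is empty; so $K(\widehat{C}) \cap K(\widehat{D})$ is everywhere unramified over $k(t)$, i.e.\ corresponds to a connected finite étale cover of $\mathbb{P}^1$, which is trivial (e.g.\ by Riemann--Hurwitz, since a connected finite étale cover of $\mathbb{P}^1$ would have negative genus unless it has degree $1$). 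Hence $K(\widehat{C}) \cap K(\widehat{D}) = k(t)$, so these two Galois extensions are linearly disjoint and $\Gal\left(K(\widehat{C})K(\widehat{D})/k(t)\right) = G_C \times G_D$. Writing $K(C) = K(\widehat{C})^{H_C}$ and $K(D) = K(\widehat{D})^{H_D}$, inside $K(\widehat{C})K(\widehat{D})$ these are the fixed fields of $H_C \times G_D$ and $G_C \times H_D$, so their compositum is the fixed field of $H_C \times H_D$ and has degree $[G_C : H_C]\,[G_D : H_D] = [K(C):k(t)]\,[K(D):k(t)]$ over $k(t)$. Therefore the natural surjection $K(C) \otimes_{k(t)} K(D) \twoheadrightarrow K(C)K(D)$ is an isomorphism of $k(t)$-vector spaces of equal finite dimension, so the tensor product is a field, $X_{\eta}$ is a single point, and $X$ is irreducible. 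Together with regularity, this shows $X$ is a regular, geometrically irreducible curve.

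The step I expect to require the most care --- and which is exactly where the hypothesis is used --- is the passage from disjoint branch loci to linear disjointness of $K(C)$ and $K(D)$. It would not be enough to know merely that $K(C) \cap K(D) = k(t)$, since that does not imply linear disjointness; it is essential to pass to the Galois closures, observe that they still have disjoint branch loci, and invoke the simple connectedness of $\mathbb{P}^1$.
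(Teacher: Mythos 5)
Your proof is correct and follows essentially the same route as the paper: regularity via the observation that over each point of $\mathbb{P}^1$ one of the two projections is a base change of an \'etale morphism, and geometric irreducibility via linear disjointness of the function fields, deduced from the fact that the branch locus is unchanged under passage to the Galois closure together with the simple connectedness of $\mathbb{P}^1_{\overline{k}}$. The only difference is that where the paper cites Fried--Jarden to reduce linear disjointness of $\overline{k}(C)$ and $\overline{k}(D)$ to that of $\overline{k}(C)$ with the single Galois closure $\overline{k}(D^{\Gal})$, you take both Galois closures and verify linear disjointness directly by a degree count in $\Gal\left(K(\widehat{C})K(\widehat{D})/k(t)\right) \cong G_C \times G_D$ --- a self-contained but equivalent argument.
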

\begin{proof}
Since $\phi$ and $\psi$ have disjoint branch loci, there is an open neighbourhood for any point in $\mathbb{P}^1$ over which one of $\phi$ and $\psi$ is \'etale and the other is regular, so the corresponding open set in $C \times_{\mathbb{P}^1} D$ is \'etale over a regular curve (since the base change of an \'etale morphism is \'etale), hence $C \times_{\mathbb{P}^1} D$ is regular.

Since $\overline{k}\left(C \times_{\mathbb{P}^1} D \right) = \overline{k}\left(C\right) \otimes_{\overline{k}\left(\mathbb{P}^1\right)} \overline{k}\left(D\right)$, the curve $C \times_{\mathbb{P}^1} D$ is geometrically irreducible if and only if $\overline{k}\left(C\right)$ and $\overline{k}\left(D\right)$ are linearly disjoint. By \cite[Lem.~2.5.3,~p.~35]{FA}, it suffices to check linear disjointness of $\overline{k}\left(C\right)$ and $\overline{k}\left(D^{\Gal}\right)$, where $\psi^{Gal}: D^{\Gal} \rightarrow \mathbb{P}^1$ denotes the Galois closure of $\psi: D \rightarrow \mathbb{P}^1$ (see \cite[p.~20]{TGT}), which holds if and only if $\overline{k}\left(C\right) \cap \overline{k}\left(D^{\Gal}\right) = \overline{k}\left(\mathbb{P}^1\right)$. Suppose that $\overline{k}\left(C\right)$ and $\overline{k}\left(D^{\Gal}\right)$ possess a common subextension with corresponding curve and morphism $\theta: E \rightarrow \mathbb{P}^1$. Since $\mathbb{P}^1_{\overline{k}}$ is algebraically simply connected, $\theta$ is ramified. Any branch points of $\theta$ are common branch points of $\phi$ and $\psi^{\Gal}$, but the branch points of $\psi^{Gal}$ are the same as those of $\psi$, hence $\theta$ is unramified, contradiction, so $C \times_{\mathbb{P}^1} D$ is geometrically irreducible.
\end{proof}
%%%%%%%%%%%%%%%%%%%%%%%%%%%%%%%%%%%%%%%%%%%%%%%%%%%%%%%%%%%%%%%%%%%%%%%%%%%%%%%%%%%
\section{Double conic bundles and del Pezzo surfaces}
We now prove Theorem \ref{thm1}. The following result shall be essential to our proof.
\begin{theorem} \label {ovhttheorem} \cite[Thm.~1.1,~p.~1894]{OVHT}
Let $k$ be a field and $f : X \rightarrow S$ a dominant morphism of $k$-varieties. Assume that the set of $s \in S\left(k\right)$ for which the fibre $f^{-1}\left(s\right)$ has the Hilbert property is not thin. Then $X$ has the Hilbert property.
\end{theorem}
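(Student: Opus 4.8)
The plan is to prove the contrapositive: if $X$ does not have the Hilbert property, i.e.\ $X(k)$ is thin, then the set
$\Sigma := \{\, s \in S(k) : f^{-1}(s) \text{ has the Hilbert property} \,\}$
is thin, contradicting the hypothesis. So assume $X(k)$ is thin. Collecting the finitely many type~I pieces into a single proper closed subset $Z \subsetneq X$ (a finite union of proper closed subsets of the irreducible variety $X$ is again proper) and recording the type~II pieces, we may write
\[
X(k) \subseteq Z(k) \cup \bigcup_{j=1}^{r} \phi_j(W_j(k)),
\]
where each $\phi_j : W_j \to X$ is a finite dominant morphism of degree $d_j \geq 2$ and, by the remarks following the definition of thin sets, each $W_j$ may be taken normal and geometrically integral. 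The aim is to show that the ``bad'' set of $s \in S(k)$ at which this covering fails to restrict to a thin subset of the fibre $X_s := f^{-1}(s)$ is itself thin in $S$.

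For $s \in S(k)$ and each $j$, let $\phi_{j,s} : (W_j)_s \to X_s$ be the base change of $\phi_j$ along $X_s \hookrightarrow X$, where $(W_j)_s = W_j \times_X X_s$; this is finite of degree $d_j$, and one checks directly that $\phi_j(W_j(k)) \cap X_s(k) = \phi_{j,s}((W_j)_s(k))$. Since for $s \in \Sigma$ the fibre $X_s$ is in particular integral, intersecting the displayed covering with $X_s(k)$ yields
\[
X_s(k) \subseteq (Z \cap X_s)(k) \cup \bigcup_{j=1}^{r} \phi_{j,s}((W_j)_s(k)).
\]
I would first dispose of the type~I term: the set $\{s : X_s \subseteq Z\}$ is contained in the complement of the dense constructible set $f(X \setminus Z)$, hence in a proper closed subset of $S$ by Chevalley's theorem, and is therefore thin; for $s$ outside it, $Z \cap X_s$ is a proper closed subset of $X_s$ and $(Z \cap X_s)(k)$ is type~I in $X_s$. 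The same Chevalley argument confines to a thin subset of $S$ those $s$ for which $X_s$ meets the (proper closed) non-flat locus of some $\phi_j$, so that outside a thin set the degrees of the components of $(W_j)_s$ over $X_s$ sum to $d_j$.

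Next I would analyse the type~II terms. Decomposing $(W_j)_s$ into its $k$-irreducible components and using that $X_s$ is integral, each component maps under $\phi_{j,s}$ either onto a proper closed subset of $X_s$ (contributing a type~I subset) or dominantly onto $X_s$ with some degree; a dominant component of degree $\geq 2$ contributes a type~II subset. Hence $\phi_{j,s}((W_j)_s(k))$ is thin in $X_s$ \emph{unless} some component of $(W_j)_s$ maps with degree $1$ onto $X_s$, i.e.\ unless $\phi_{j,s}$ admits a rational section over $X_s$. Writing
\[
B_j := \{\, s \in S(k) : \phi_{j,s} \text{ admits a rational section over } X_s \,\},
\]
the crux of the whole argument is to prove that each $B_j$ is thin in $S$. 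This is a relative Hilbert-irreducibility statement, and it is where I expect the real difficulty to lie. My approach would be to pass to the Galois closure $\widetilde{W_j} \to X$ of $\phi_j$, with group $G$ and the subgroup $H \subsetneq G$ of index $d_j$ corresponding to $W_j$; a rational section over $X_s$ forces the decomposition group of the family at the generic point of $X_s$ into a conjugate of $H$, a degeneration of the monodromy. Making this precise — tracking how the splitting type of $\phi_j$ over $X_s$ varies with $s$ and showing that the resulting section locus is thin in $S$ — is the technical heart; I would stress that it is a purely geometric assertion about the base change of finite covers, and so requires no Hilbertianity hypothesis on $k$.

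Granting that each $B_j$ is thin, set $B = \{s : X_s \subseteq Z\} \cup \bigcup_{j=1}^{r} B_j$ together with the thin locus where some $X_s$ meets the non-flat locus of a $\phi_j$; then $B$ is thin in $S$. Since $\Sigma$ is assumed not thin, $\Sigma \not\subseteq B$, so we may choose $s \in \Sigma \setminus B$. For this $s$ the fibre $X_s$ is integral, $X_s \not\subseteq Z$, the covers are flat over $X_s$, and no $\phi_{j,s}$ has a section component, so the displayed inclusion exhibits $X_s(k)$ as a finite union of type~I and type~II subsets of $X_s$, i.e.\ as a thin set. But $s \in \Sigma$ means $X_s$ has the Hilbert property, so $X_s(k)$ is not thin — a contradiction. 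Therefore $X(k)$ is not thin, and $X$ has the Hilbert property.
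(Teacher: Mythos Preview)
The paper does not prove this theorem; it is quoted from \cite[Thm.~1.1]{OVHT} and used as a black box, so there is no in-paper argument to compare against.

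That said, your outline is essentially the shape of the Bary-Soroker--Fehm--Petersen proof: assume $X(k)$ is thin, restrict the finitely many covers $\phi_j\colon W_j\to X$ and the closed set $Z$ to each fibre $X_s$, and show that the set of $s$ for which this restricted data fails to exhibit $X_s(k)$ as thin is itself thin in $S$. You correctly isolate the only substantive step as the thinness of $B_j=\{s:\phi_{j,s}\text{ has a degree-one component over }X_s\}$, but you then write ``granting that each $B_j$ is thin'' and proceed. Since that step is the entire content of the theorem, what you have is an outline with the main lemma left open. Your monodromy sketch points the right way; to make it precise one passes to the Galois closure $\widetilde{W}_j\to X$ with group $G$, takes the Stein factorisation of the composite $\widetilde{W}_j\to S$ to obtain a finite map $T_j\to S$ whose fibre over $s$ indexes the components of $(\widetilde{W}_j)_s$, and checks that the locus where the decomposition group drops into a conjugate of $H\lneq G$ is covered by $k$-points of components of $T_j$ of degree $>1$ over $S$ (geometric integrality of $W_j$ over $k$ is what rules out degree~$1$).

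A minor separate imprecision: your flatness bookkeeping is slightly off. The set of $s$ for which $X_s$ \emph{meets} the non-flat locus of $\phi_j$ need not be thin (that locus can dominate $S$). What you actually need, and what follows from generic flatness together with dominance of $f$, is that the set of $s$ for which the \emph{generic point} of $X_s$ lies in the non-flat locus is contained in a proper closed subset of $S$; this suffices to ensure the fibre degrees add up to $d_j$.
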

In particular, we have the following corollary for conic fibrations.
\begin{corollary} \label{maincor}
Let $k$ be a Hilbertian field and $\pi: S \rightarrow \mathbb{P}^1$ a conic fibration. Assume that the set
\begin{equation*}
\{P \in \mathbb{P}^1\left(k\right) : \pi^{-1}\left(P\right)\left(k\right) \neq \emptyset \}
\end{equation*}
is not thin. Then $S$ has the Hilbert property over $k$.
\end{corollary}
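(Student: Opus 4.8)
The plan is to apply Theorem \ref{ovhttheorem} to the conic fibration $\pi : S \rightarrow \mathbb{P}^1$ itself. By that theorem, it suffices to show that the set of $P \in \mathbb{P}^1(k)$ for which the fibre $\pi^{-1}(P)$ has the Hilbert property is not thin. The hypothesis gives us that the set $T = \{P \in \mathbb{P}^1(k) : \pi^{-1}(P)(k) \neq \emptyset\}$ is not thin, so it is enough to argue that for all but a thin subset of $P \in T$, the fibre $\pi^{-1}(P)$ in fact has the Hilbert property; removing a thin set from a non-thin set leaves a non-thin set, so the conclusion then follows from Theorem \ref{ovhttheorem}.

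The key point is that a smooth plane conic with a $k$-rational point is isomorphic to $\mathbb{P}^1$, and $\mathbb{P}^1$ has the Hilbert property over the Hilbertian field $k$ by Remark \ref{hpremark}. So I would first identify the locus of $P \in \mathbb{P}^1$ over which the fibre is singular: since $\pi$ is a conic fibration, the discriminant is a closed subset $\Delta \subsetneq \mathbb{P}^1$, which is finite (being a proper closed subset of a curve). Hence $\Delta(k)$ is a type I thin subset of $\mathbb{P}^1(k)$. For every $P \in T \setminus \Delta(k)$, the fibre $\pi^{-1}(P)$ is a smooth plane conic with a $k$-point, therefore isomorphic to $\mathbb{P}^1_k$, and so has the Hilbert property over $k$. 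Thus $T \setminus \Delta(k)$ is contained in the set of $P$ for which $\pi^{-1}(P)$ has the Hilbert property; since $T$ is not thin and $\Delta(k)$ is thin, $T \setminus \Delta(k)$ is not thin, hence that larger set is not thin either.

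I don't expect any serious obstacle here — the argument is essentially bookkeeping with thin sets plus the classical fact that a conic with a rational point is rational. The only mild subtlety worth spelling out is that "not thin minus thin is not thin": if $T \setminus \Delta(k)$ were thin, then $T \subseteq (T \setminus \Delta(k)) \cup \Delta(k)$ would be a union of two thin sets, hence thin, contradicting the hypothesis. With that in hand, Theorem \ref{ovhttheorem} applied to $f = \pi$ immediately yields that $S$ has the Hilbert property over $k$.
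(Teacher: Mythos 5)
Your proposal is correct and follows essentially the same route as the paper: apply Theorem \ref{ovhttheorem} to $\pi$ itself, discard the finitely many singular fibres (a thin set), and use that a smooth genus-zero curve with a rational point is isomorphic to $\mathbb{P}^1$, which has the Hilbert property over a Hilbertian field. The paper's proof is just a terser version of the same bookkeeping.
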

\begin{proof}
By the Riemann-Roch theorem, any smooth curve of genus zero with a rational point is isomorphic to $\mathbb{P}^1$, which has the Hilbert property (see Remark \ref{hpremark}), and all but finitely many fibres of $\pi$ are smooth curves of genus zero.
\end{proof}
Then Theorem \ref{thm2} follows immediately from the following proposition.
\begin{proposition} \label{mainprop}
Let $S$ be a smooth projective surface defined over a Hilbertian field $k$ with two distinct conic fibrations $\pi_i: S \rightarrow \mathbb{P}^1$, $i=1,2$, and suppose that there exists $P_0 \in S\left(k\right)$ such that $\pi_1^{-1}\left(\pi_1\left(P_0\right)\right)$ is smooth. For each $i =1 ,2$, the set
\begin{equation*}
A_i =\{P \in \mathbb{P}^1\left(k\right) : \pi_i^{-1} \left(P\right)\left(k\right) \neq \emptyset\} 
 \end{equation*}
is not thin.
\end{proposition}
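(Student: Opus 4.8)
The plan is to deduce the non-thinness of $A_1$ from a fibre-product construction, and then to bootstrap to $A_2$. Set $F := \pi_1^{-1}(\pi_1(P_0))$; by hypothesis this is a smooth conic, and it carries the $k$-point $P_0$, so $F \cong_k \mathbb{P}^1$. Restriction of the second fibration gives a finite morphism $g := \pi_2|_F : F \to \mathbb{P}^1$ of degree $m := F_1 \cdot F_2$, where $F_i$ is the class of a fibre of $\pi_i$; since the two fibrations are distinct, the Hodge index theorem together with $-K_S \cdot F_i = 2$ forces $m \geq 1$, so $g$ is non-constant. I would then form the fibre product $X := S \times_{\mathbb{P}^1} F$ along $\pi_2$ and $g$, with its projections $p_1 : X \to S$ and $p_2 : X \to F$.

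The first point is that $X$ is $k$-rational, hence has the Hilbert property over the Hilbertian field $k$. Indeed $p_2 : X \to F$ is a conic bundle over $F \cong \mathbb{P}^1$ (its fibre over $y$ being $\pi_2^{-1}(g(y))$), and it admits the diagonal section $y \mapsto (y,y)$, which is well defined precisely because $g = \pi_2|_F$. A conic bundle over $\mathbb{P}^1$ with a section has generic fibre isomorphic to $\mathbb{P}^1$ over the function field of the base, so $X$ is $k$-birational to $\mathbb{P}^2$; as the Hilbert property is a birational invariant and $\mathbb{P}^2$ has it over every Hilbertian field (by Theorem \ref{ovhttheorem}), $X$ has the Hilbert property.

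Now consider $q := \pi_1 \circ p_1 : X \to \mathbb{P}^1$, a surjective morphism. Since $p_1(\xi) \in S(k)$ lies on $\pi_1^{-1}(q(\xi))$ for every $\xi \in X(k)$, we have $q(X(k)) \subseteq A_1$, so it suffices to show $q(X(k))$ is not thin. For this I would use the standard fact that the image of the rational points of a Hilbert-property variety under a dominant morphism with geometrically irreducible generic fibre is not thin: were $q(X(k))$ contained in $W(k) \cup \bigcup_{j=1}^{N} \phi_j(V_j(k))$ with $W \subsetneq \mathbb{P}^1$ closed and each $\phi_j : V_j \to \mathbb{P}^1$ finite of degree $\geq 2$ (with $V_j$ normal and geometrically irreducible), then $q^{-1}(W)$ is a proper closed subset of $X$, while geometric irreducibility of the generic fibre of $q$ (equivalently, $k(\mathbb{P}^1)$ being algebraically closed in $k(X)$) makes each $X \times_{\mathbb{P}^1} V_j$ irreducible with $\mathrm{pr}_X$ generically finite of degree $\geq 2$; since $q^{-1}(\phi_j(V_j(k))) \cap X(k) \subseteq \mathrm{pr}_X((X \times_{\mathbb{P}^1} V_j)(k))$, this would exhibit $X(k)$ as thin, a contradiction. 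It therefore remains to prove that the generic fibre of $q$ is geometrically irreducible. This fibre equals $C \times_{\mathbb{P}^1} F_K$, where $K := k(\mathbb{P}^1)$ is the function field of the base of $\pi_1$, $C$ is the generic fibre of $\pi_1$ (a smooth conic over $K$, smoothness using $\ch k = 0$), mapped to $\mathbb{P}^1$ by $\pi_2|_C$, and $F_K \to \mathbb{P}^1$ is $g$; by Lemma \ref{fplemma} it suffices that $\pi_2|_C$ and $g$ have disjoint branch loci.

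This disjointness is the step I expect to be the crux. The branch locus of $g$ is a fixed finite set of points of $\mathbb{P}^1$ defined over $\overline{k}$, whereas the branch locus of $\pi_2|_{\pi_1^{-1}(s)}$ moves nontrivially with $s$: no point $t^{*} \in \mathbb{P}^1(\overline{k})$ can be a branch point for every $s$, since the members of the base-point-free pencil $|F_1|$ meet the fixed reduced curve $\pi_2^{-1}(t^{*})$ transversally in general position (Bertini/Kleiman transversality, using $\ch k = 0$ and reducedness of the fibres of $\pi_2$), so $\pi_1^{-1}(s) \cap \pi_2^{-1}(t^{*})$ is reduced for all but finitely many $s$, and in particular for the generic one. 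Hence the branch locus of $\pi_2|_C$ contains no point defined over $\overline{k}$, so is disjoint from that of $g$; the generic fibre of $q$ is geometrically irreducible and $A_1$ is not thin. Finally, since $A_1$ is then Zariski dense in $\mathbb{P}^1$, I can choose $s_1 \in A_1$ with $\pi_1^{-1}(s_1)$ smooth; it is a copy of $\mathbb{P}^1$ with infinitely many $k$-points, almost all of which $\pi_2$ maps to points over which the fibre of $\pi_2$ is smooth. This yields a $k$-point on a smooth fibre of $\pi_2$, and rerunning the whole argument with $\pi_1$ and $\pi_2$ interchanged shows that $A_2$ is not thin.
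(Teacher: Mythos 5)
Your proposal is correct, but it reaches the conclusion by a genuinely different route from the paper. The paper argues by contradiction: it assumes $\bigcup_{P \in A_1}\pi_2\left(\pi_1^{-1}\left(P\right)\left(k\right)\right)$ is covered by finitely many covers $\varphi_i : C_i \to \mathbb{P}^1$, chooses a smooth fibre $\pi_1^{-1}\left(P\right) \cong \mathbb{P}^1$ avoiding the finitely many bad points (where $\pi_1$ restricted to the $\pi_2$-fibres over the branch points of the $\varphi_i$ ramifies), pulls the $\varphi_i$ back to that fibre via $\pi_2$, and uses the Hilbert property of $\mathbb{P}^1$ to find a point escaping all of them; it treats $A_2$ first and then gets $A_1$ by symmetry. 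You instead fix the single auxiliary cover $g = \pi_2|_F$ of the given smooth fibre $F$, form the rational surface $X = S \times_{\mathbb{P}^1} F$ (rational via the diagonal section of the conic bundle $p_2$, hence with the Hilbert property), and push $X\left(k\right)$ forward along $q = \pi_1 \circ p_1$, invoking the standard fact that images under dominant maps with geometrically irreducible generic fibre preserve non-thinness. Both arguments ultimately rest on the same two ingredients: Lemma \ref{fplemma}, and the observation that reducedness of the conic-fibration fibres (plus $\ch k = 0$) forces the branch locus of $\pi_2$ restricted to a generic/general $\pi_1$-fibre to miss any prescribed finite set of constant points. Your packaging outsources the thin-set bookkeeping to two standard lemmas rather than unwinding the covers by hand, at the cost of having to verify integrality/rationality of $X$ and geometric irreducibility of the generic fibre of $q$. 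Two small presentational points: your appeal to the Hodge index theorem to get $F_1 \cdot F_2 \geq 1$ needs one more line (proportionality plus $-K_S \cdot F_i = 2$ gives $F_1 \equiv F_2$, and one must still argue that numerically equivalent fibre classes with zero product force the two fibrations to coincide) --- though the paper's own Lemma \ref{mainlemma} leans on the same fact implicitly, and Theorem \ref{thm1} assumes it outright; and the Bertini/Kleiman invocation is heavier than necessary, since generic \'etaleness of $\pi_1$ on the reduced curve $\pi_2^{-1}\left(t^*\right)$ in characteristic zero already gives the reducedness of the generic intersection, which is exactly the paper's argument.
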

\begin{lemma} \label{mainlemma}
Each of the sets $A_i$ in Proposition \ref{mainprop} is infinite. That is, each of the conic fibrations has infinitely many fibres which each have a rational point.
\end{lemma}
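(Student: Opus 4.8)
The strategy is to transport rational points along the \emph{other} fibration, using the fact that a non-constant morphism of smooth projective curves over an infinite field has infinite image on rational points. The one genuinely geometric input is the following: since $\pi_1 \neq \pi_2$, the numerical classes $F_1, F_2 \in \mathrm{NS}(S)$ of their fibres satisfy $F_1 \cdot F_2 > 0$. Indeed each $F_i$ is nef with $F_i^2 = 0$, so if $F_1 \cdot F_2 = 0$ the Hodge index theorem forces $F_1$ and $F_2$ to be proportional in $\mathrm{NS}(S)_{\mathbb{Q}}$; comparing the base-point-free pencils $|mF_i|$ (which may be done with linear equivalence, as $S$ is geometrically rational) then gives $\pi_1 = \pi_2$ up to $\mathrm{Aut}_k(\mathbb{P}^1)$, a contradiction. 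It follows that if $F$ is any smooth fibre of $\pi_i$, so that $F$ is geometrically irreducible with $[F] = F_i$, then $F$ is not contracted by $\pi_j$ for $j \neq i$: a curve contracted by $\pi_j$ has intersection number $0$ with a fibre of $\pi_j$, whereas $F \cdot F_j = F_i \cdot F_j > 0$. Hence $\pi_j|_F : F \to \mathbb{P}^1$ is non-constant.

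I would first show that $A_2$ is infinite. Put $F_0 := \pi_1^{-1}(\pi_1(P_0))$. By hypothesis $F_0$ is smooth and $P_0 \in F_0(k)$, so by Riemann--Roch $F_0 \cong \mathbb{P}^1_k$; in particular $F_0(k)$ is infinite, since a Hilbertian field is infinite. By the previous paragraph $\pi_2|_{F_0} : F_0 \to \mathbb{P}^1$ is a non-constant morphism of smooth projective curves, hence finite of some degree $d \geq 1$, so $\pi_2(F_0(k))$ is infinite, as each point of $\mathbb{P}^1$ has at most $d$ preimages in $F_0$. For each $P \in F_0(k)$ we have $P \in \pi_2^{-1}(\pi_2(P))(k)$, whence $\pi_2(P) \in A_2$; thus $A_2 \supseteq \pi_2(F_0(k))$ is infinite.

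Finally, $\pi_2$ has only finitely many singular fibres, so since $A_2$ is infinite we may choose $Q \in A_2$ with $G := \pi_2^{-1}(Q)$ smooth. As $G(k) \neq \emptyset$, Riemann--Roch again gives $G \cong \mathbb{P}^1_k$, and $\pi_1|_G : G \to \mathbb{P}^1$ is non-constant by the first paragraph; repeating the argument of the second paragraph shows that $\pi_1(G(k)) \subseteq A_1$ is infinite. The only step requiring real geometric work is the positivity $F_1 \cdot F_2 > 0$ --- equivalently, that fibres of one fibration are not contracted by the other; once that is in hand, the rest is the formal argument above applied twice, and the asymmetry of the hypothesis (only $\pi_1$ is assumed to have a smooth fibre through $P_0$) is absorbed by first bootstrapping from $A_2$ to a smooth $\pi_2$-fibre with a rational point.
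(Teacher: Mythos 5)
Your proof is correct and its core mechanism is exactly the paper's: the smooth fibre through $P_0$ is a $\mathbb{P}^1$ with infinitely many rational points, these push forward to infinitely many points of $A_2$ because the restriction of $\pi_2$ to that fibre is non-constant (hence finite-to-one), and one then bootstraps to a smooth $\pi_2$-fibre with a rational point to conclude that $A_1$ is infinite as well. The only place you genuinely diverge is your first paragraph: the paper never proves that a smooth fibre of one fibration meets every fibre of the other --- it simply invokes the standing hypothesis of Theorem \ref{thm1} that fibres from different fibrations have non-zero intersection product (this is what ``distinct'' in Proposition \ref{mainprop} is standing in for), whereas you derive the positivity $F_1 \cdot F_2 > 0$ via the Hodge index theorem. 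That derivation is a nice bonus which would make the intersection hypothesis redundant, but be aware of a loophole in how you phrase it: two conic fibrations differing by a non-trivial automorphism of the base are distinct as morphisms yet have $F_1 \cdot F_2 = 0$, so your final ``contradiction'' only bites if ``distinct'' is read as ``not equal up to $\mathrm{Aut}_k(\mathbb{P}^1)$'' (equivalently, non-proportional fibre classes). Under the hypothesis the paper actually intends, your first paragraph can be dropped and the remaining two paragraphs reproduce the paper's argument.
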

\begin{proof}
Since $\pi_1^{-1}\left(\pi_1\left(P_0\right)\right)$ is smooth, $\#\pi_1^{-1} \left(\pi_1\left(P_0\right)\right)\left(k\right) = \infty$. Let $\{Q_i\}_{i \in \mathbb{N}}$ be an infinite collection of rational points on $\pi_1^{-1} \left(\pi_1 \left(P_0\right) \right)$. Since each smooth fibre of one fibration intersects any fibre from the other fibration in a finite set, $\{\pi_2\left(Q_i\right)\} \subset A_2$ is infinite, so $A_2$ is infinite. Repeating this argument with any one of the fibres $\pi_2^{-1}\left(\pi_2\left(Q_i\right)\right)$ which is smooth, we see that $A_1$ is also infinite. 
\end{proof}
\begin{proof}[Proof of Proposition \ref{mainprop}]
It suffices to prove that a subset of $A_2$ is not thin, since Lemma \ref{mainlemma} implies that $\pi_2$ also has a smooth fibre with a rational point. So, assume that $\bigcup_{P \in A_1} \pi_2 \left(\pi_1^{-1}\left(P\right)\left(k\right)\right) \subset A_2$ is thin. Then it is easily seen that there exists a finite collection of dominant finite morphisms $\varphi_i : C_i \rightarrow \mathbb{P}^1$, $i=1,...,r$ where $C_i$ is a normal (hence smooth) geometrically irreducible curve defined over $k$ and $\deg \varphi_i > 1$, such that $\bigcup_{P \in A_1} \pi_2 \left(\pi_1^{-1}\left(P\right)\left(k\right)\right) \subset \bigcup_{i=1}^r \varphi_i \left(C_i\left(k\right)\right)$. Let
\begin{equation*}
B = \bigcup_{i=1}^r B_{\varphi_i} = \{R_1,\ldots,R_m\},
\end{equation*}
where $B_{\varphi_i}$ denotes the branch locus of the morphism $\varphi_i$.

Since each fibre $\pi_2^{-1}\left(R_i\right)$ is reduced, the morphism $$\pi_1|_{\pi_2^{-1}\left(R_i\right)}: \pi_2^{-1}\left(R_i\right) \rightarrow \mathbb{P}^1$$ has finite branch locus for each $i=1,\dots,m$. Denote the union of these branch loci by $B'$, and denote by $A_1'$ the set of $P \in A_1 \setminus B'$ such that $\pi_1^{-1}\left(P\right)$ is smooth, which is non-empty by Lemma \ref{mainlemma}. Taking $P \in A_1'$, the morphism $\pi_1|_{\pi_2^{-1}\left(R_i\right)}: \pi_2^{-1}\left(R_i\right) \rightarrow \mathbb{P}^1$ is unramified over $P$ for each $i$, which is equivalent to $\left(\pi_1|_{\pi_2^{-1}\left(R_i\right)}\right)^{-1}\left(P\right) = \pi_1^{-1}\left(P\right) \cap \pi_2^{-1}\left(R_i\right)$ being reduced for each $i$ (see \cite[Lem.~4.3.20,~p.~139]{AGAC}) and in turn equivalent to $\pi_2|_{\pi_1^{-1}\left(P\right)}$ being unramified over every $R_i$.
Then for all $P \in A_1'$, $i =1,...,r$, the branch loci of $\pi_2|_{\pi_1^{-1}\left(P\right)}$ and $C_i$ are disjoint.

For each $P \in A_1'$, consider the fibre product
\begin{equation*} \label{FP}
\begin{tikzcd}
& F_i \arrow[r, "\psi_i"] \arrow[d, "\theta_i"] & C_i \arrow[d, "\varphi_i"] \\
& \pi_1^{-1}\left(P\right) \arrow[r, "\pi_2|_{\pi_1^{-1}\left(P\right)}"] & \mathbb{P}^1. 
\end{tikzcd}
\end{equation*}
By Lemma \ref{fplemma}, $F_i$ is a smooth, irreducible curve. Since surjectivity is preserved under base change, the maps $\psi_i$ and $\theta_i$ are surjective. By comparing ramification indices of points on $\mathbb{P}^1$ for the morphism $\varphi_i \circ \psi_i = \pi_2|_{\pi_1^{-1}\left(P\right)} \circ \theta_i: F_i \rightarrow \mathbb{P}^1$ and using the compatibility of ramification indices with composition of morphisms, one sees that $\theta_i$ is ramified, hence $\deg \theta_i > 1$. Since $\pi_1^{-1}\left(P\right)$ has the Hilbert property, the set $\pi_1^{-1}\left(P\right)\left(k\right) \setminus \bigcup_{i=1}^r \theta_i \left(F_i \left(k\right)\right)$ is infinite.

Let $Q \in \pi_1^{-1}\left(P\right)\left(k\right) \setminus \bigcup_{i=1}^r \theta_i \left(F_i \left(k\right)\right)$. Then we claim that 
\begin{equation*}
\pi_2 \left(Q\right) \in \bigcup_{P \in A_1} \pi_2 \left(\pi_1^{-1}\left(P\right) \left(k\right) \right) \setminus \bigcup_{i=1}^r \varphi_i \left(C_i\left(k\right)\right).
\end{equation*}
Clearly $\pi_2 \left(Q\right) \in \bigcup_{P \in A_1} \pi_2\left(\pi_1^{-1}\left(P\right) \left(k\right)\right)$, and if there is $Q' \in C_i\left(k\right)$ with $\varphi_i \left(Q'\right) = \pi_2 \left(Q\right)$ for some $i$, then there exists $Q'' \in F_i\left(k\right)$ such that $\theta_i \left(Q''\right) = Q$, which is impossible since $Q \in \pi_1^{-1}\left(P\right)\left(k\right) \setminus \theta_i \left(F_i \left(k\right)\right)$. Then we have a contradiction to our original assumption that $\bigcup_{P \in A_1} \pi_2\left(\pi_1^{-1}\left(P\right)\left(k\right)\right) \subset \bigcup_{i=1}^r \varphi_i \left(C_i \left(k\right)\right)$, and so $\bigcup_{P \in A_1} \pi_2\left(\pi_1^{-1}\left(P\right)\left(k\right)\right)$ is not thin.
\end{proof}
\begin{proof}[Proof of Theorem \ref{thm1}]
The result follows from Proposition \ref{mainprop} and Corollary \ref{maincor}.
\end{proof}
For the proof of Theorem \ref{thm2}, we will employ the following fact, which is seen in the proof of \cite[Thm.~5,~p.~21]{MMRS}. Although the del Pezzo surfaces in the statement there are minimal, minimality is not necessary. We give the proof for completeness.
\begin{lemma} \label{dcslemma}
Let $S$ be a del Pezzo surface of degree $d \in \{1,2,4\}$ with a conic fibration $\pi: S \rightarrow \mathbb{P}^1$. Then there exists a second conic fibration $\pi': S \rightarrow \mathbb{P}^1$ such that fibres from distinct fibrations have non-zero intersection product.
\end{lemma}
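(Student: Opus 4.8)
The plan is to produce the second fibration through its numerical class of fibres. On a del Pezzo surface, a conic fibration $\pi\colon S\to\mathbb P^1$ is determined by the class $f=[\pi^{-1}(P)]\in\Pic(S)$ of a fibre; such an $f$ satisfies $f^2=0$ and $(-K_S)\cdot f=2$ and is nef, hence base-point-free with $h^0(\mathcal O_S(f))=2$, and conversely every class with these properties is the fibre class of a conic fibration. Moreover two fibres of conic fibrations $\pi_1\neq\pi_2$ meet in $f_1\cdot f_2$ points. Since $d\in\{1,2,4\}$, the number $4/d$ is an integer, so I would set
\begin{equation*}
f':=\tfrac{4}{d}(-K_S)-f\in\Pic(S).
\end{equation*}
This class is Galois-stable (as $-K_S$ and $f$ are), and a direct computation gives $f'^2=0$, $(-K_S)\cdot f'=2$ and $f\cdot f'=\tfrac{8}{d}\neq 0$. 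It therefore suffices to show that $f'$ is the fibre class of a conic fibration $\pi'$ over $k$: then $\pi'\neq\pi$ (because $f\cdot f'\neq 0=f^2$), and fibres of $\pi$ and $\pi'$ meet in $\tfrac{8}{d}\neq 0$ points.

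For $d\in\{1,2\}$ I would invoke the Geiser involution ($d=2$) and the Bertini involution ($d=1$). Each is an automorphism $\iota\colon S\to S$ defined over $k$, being the deck transformation of the canonically determined double cover given by $|-K_S|$ when $d=2$ and by $|-2K_S|$ when $d=1$; on $\Pic(\overline S)$ the map $\iota^*$ fixes $-K_S$ and acts as $-\operatorname{id}$ on $(-K_S)^{\perp}$, so that $\iota^*f=\tfrac{4}{d}(-K_S)-f=f'$. Then $\pi':=\pi\circ\iota$ is a conic fibration over $k$ whose fibre over $P$ is $\iota^{-1}(\pi^{-1}(P))$, a curve in the class $\iota^*f=f'$, as required.

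For $d=4$ there is no such involution, so I would check directly that $f'=-K_S-f$ is a conic-fibration class. Riemann--Roch gives $\chi(\mathcal O_S(f'))=2$, while $h^2(\mathcal O_S(f'))=h^0(\mathcal O_S(2K_S+f))=0$ because $(2K_S+f)\cdot(-K_S)=2-2d<0$ and $-K_S$ is ample; hence $h^0(\mathcal O_S(f'))\geq 2$ and $f'$ is effective. Next I would rule out a fixed part: if the fixed divisor $Z$ of $|f'|$ were nonzero, then $(-K_S)\cdot Z$ and $(-K_S)\cdot(f'-Z)$ would be positive integers summing to $2$, hence both equal to $1$, which (together with $f'^2=0$) forces $Z$ to be a $(-1)$-curve and the moving part $M=f'-Z$ to satisfy $M^2=1$ and $(-K_S)\cdot M=1$ --- impossible by the Hodge index theorem. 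Therefore $|f'|$ is a base-point-free pencil; since $S$ is rational, its Stein factorisation yields a morphism $S\to\mathbb P^1$ whose fibres have arithmetic genus $0$ and anticanonical degree $2$, i.e.\ are plane conics. (Alternatively, one may simply cite the classification of del Pezzo surfaces, in which $-K_S-f$ appears in the list of conic-fibration classes.)

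The step I expect to be the main obstacle is this last one, for $d=4$: showing that the effective class $-K_S-f$ carries no fixed component, so that it genuinely defines a fibration rather than merely being effective. The remaining points are routine manipulations of intersection numbers, once the Bertini and Geiser involutions are brought in for $d\in\{1,2\}$ and one notes that $\tfrac{4}{d}(-K_S)-f$ is automatically Galois-stable.
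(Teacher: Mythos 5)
Your proposal is correct, and it reaches the same divisor class as the paper --- $f' = \tfrac{4}{d}(-K_S) - f$ is exactly the class $D = -\tfrac{4}{d}K_S - C$ used there --- but the route you take to show it defines a conic fibration is genuinely different. The paper runs a single uniform argument for all $d \in \{1,2,4\}$: Riemann--Roch plus ampleness of $-K_S$ gives $l(D) \geq 2$, and base-point-freeness is established by bounding the number of components of a member of $|D|$ by its anticanonical degree, ruling out a double component by adjunction, observing that a reducible member must be a sum of two $(-1)$-curves (of which there are finitely many), and using $D^2 = 0$ to conclude that irreducible members are disjoint. Your use of the Geiser and Bertini involutions for $d \in \{1,2\}$ is a clean shortcut: since $\iota^*$ fixes $-K_S$ and is $-\operatorname{id}$ on $(-K_S)^\perp$, one gets $\iota^* f = f'$ and hence the second fibration as $\pi \circ \iota$ with no linear-system analysis at all, at the cost of importing the (standard, but nontrivial) description of $\iota^*$ on $\Pic(\overline{S})$. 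Your $d=4$ argument is in the same spirit as the paper's but organised around excluding a fixed component via the Hodge index theorem rather than counting components; it works, though two steps are compressed and should be spelled out: the deduction $Z^2 = -1$ and $M^2 = 1$ needs the parity constraint $D^2 + K_S \cdot D \equiv 0 \pmod 2$ together with $M \cdot Z \geq 0$ and $M^2 \geq 0$ for the moving part, and the claim that the Stein factorisation still has fibre class $f'$ (rather than a proper submultiple) needs the same parity argument applied to $f'/m$, since $-K_S \cdot (f'/m) = 2/m$ forces $m=1$. With those details filled in, the proof is complete.
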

\begin{proof}
Let $C$ be a fibre of the conic fibration $\pi: S \rightarrow \mathbb{P}^1$, and define $$D := - \frac{4}{d} K_S - C.$$ We will show that the linear system $|D|$ induces the sought conic fibration $\pi'$.

By the Riemann-Roch theorem for surfaces \cite[Thm.~V.1.6,~p.~362]{AG}, $$l\left(D\right) - s\left(D\right) + l\left(K_S - D\right) = \frac{1}{2} D \cdot \left(D - K_S\right) + \chi\left(S, \mathcal{O}_S\right).$$  Since $S$ is geometrically rational and $-K_S$ is ample, it follows from Serre duality \cite[Cor.~III.7.7,~p.~244]{AG} that $\chi\left(S, \mathcal{O}_S\right) = 1$. Further, if $E$ is an effective divisor linearly equivalent to $K_S - D$, then the ampleness of $-K_S$ implies that $-K_S \cdot E > 0$, but $$-K_S \cdot E = -K_S \cdot \left(\left(1 + \frac{4}{d}\right) K_S + C\right) =  - \left(1 + \frac{4}{d}\right)d + K_S \cdot C = -\left(d+2\right) < 0,$$ where $K_S \cdot C = - 2$ by the adjunction formula \cite[Exercise~V.1.3,~p.~366]{AG} and the fact that $C^2 = 0$. Thus $l\left(K_S - D\right) = 0$. Next, we compute $$D^2 = \frac{16}{d^2}d + \frac{8}{d} \left(-2\right) + 0 = 0,$$ $$-K_S \cdot D = \frac{4}{d}d - 2 = 2.$$ Then the Riemann-Roch theorem applied to $D$ rearranges to give $$l\left(D\right) = 2 + s\left(D\right) \geq 2,$$ and so $D$ is effective. By the adjunction formula, $D$ has arithmetic genus zero.

It only remains to verify that $|D|$ is base-point-free. Since $-K_S \cdot D = 2$ and $-K_S$ is ample, any element of $|D|$ has at most $2$ irreducible components. Then, for any $E \in |D|$, one of the following possibilities holds:
\begin{enumerate} [label=(\alph*)]
\item $E$ is irreducible;
\item $E = E_1 + E_2$, where $E_1 \neq E_2$, or
\item $E = 2E_1$
\end{enumerate}
for some irreducible curves $E_1$, $E_2$ on $S$. In (c), we obtain $-K_S \cdot E = 1$, but then the adjunction formula implies that the arithmetic genus of $E_1$ is not an integer, hence this possibility cannot be realised. In (b), we square to get $$0 = E_1^2 + E_2^2 + 2E_1 \cdot E_2 \geq E_1^2 + E_2^2.$$ We cannot have $E_i^2 = 0$ for $i=1,2$, as then the adjunction formula implies that the arithmetic genus of $E_i$ is not an integer, and on a del Pezzo surface, every effective divisor has self-intersection at least $-1$, hence $E_1^2 = E_2^2 = -1$. However, there are only finitely many curves on $S$ (namely the exceptional curves) with self-intersection $-1$, hence all but finitely many curves in $|D|$ are irreducible. Since the intersection product of two irreducible curves is zero if and only if they do not intersect, $|D|$ is base-point-free, hence it gives rise to a morphism $\pi': S \rightarrow \mathbb{P}^1$. Since the arithmetic genus of $D$ is zero, $\pi'$ is a conic fibration.

Finally, since the intersection product of a fibre from $\pi$ with a fibre from $\pi'$ is $$C \cdot D = C \cdot \left(-\frac{4}{d}K_S - C\right) = \frac{8}{d} > 0,$$ we are done. 
\end{proof} 
\begin{proof} [Proof of Theorem \ref{thm2}]
The result for $d \geq 5$ follows immediately from the fact that del Pezzo surfaces of degree $d \geq 5$ are rational, see \cite[Thm.~9.4.8,~p.~277]{RPV}. If $d=4$, then the assumption $S\left(k\right) \neq \emptyset$ implies that there exists $P \in S\left(k\right)$ not lying on any of the exceptional curves of $S$, see \cite[Thm.~30.1,~p.~162]{CF}. Blowing up $S$ at $P$, we obtain a del Pezzo surface $\widetilde{S}$ of degree $3$ with a curve $L$ which is a line under the anticanonical embedding, and $L\left(k\right) \neq \emptyset$ implies $\widetilde{S}\left(k\right) \neq \emptyset$, so part (b) follows from part (c).

For part (c), it can be shown that $S$ has a conic fibration (considering the residual intersection of the hyperplanes containing $L$), and by \cite[Thm.~30.1,~p.~162]{CF}, there exists a rational point $P \in S\left(k\right)$ not lying on any of the exceptional curves of $S$. Blowing up $S$ at $P$, we obtain a del Pezzo surface of degree $2$ with a conic fibration and a rational point, so part (c) follows from part (d).

By \cite[Cor.~8,~p.~919]{QFECUD1CB} and Lemma \ref{dcslemma}, a del Pezzo surface $S$ of degree $d \in \{1,2\}$ with a rational point and a conic fibration satisfies the hypotheses of Theorem \ref{thm1}, hence it has the Hilbert property, hence part (d) holds.
\end{proof}
\begin{remark} \label{cbrem}
By a result of Iskovskih (see \cite[Thm.~1.6,~p.~572]{RSPRC}), the canonical class of a relatively minimal double conic bundle has positive self-intersection.
\end{remark}
%%%%%%%%%%%%%%%%%%%%%%%%%%%%%%%%%%%%%%%%%%%%%%%%%%%%%%%%%%%%%%%%%%%%%%%%%%%%%%%%%%%
\section{Del Pezzo varieties} \label{sectionDPV}
In this section, we introduce smooth del Pezzo varieties and prove Theorem \ref{thm3}. We draw from the excellent exposition given in \cite[Ch.~3]{AG5}.
\begin{mydef}
A \emph{smooth del Pezzo variety} is a pair $\left(X,H\right)$ consisting of a smooth projective algebraic variety $X$ and an ample Cartier divisor $H$ on $X$ such that $-K_X = \left(n - 1\right) H$, where $n = \dim X$. The \emph{degree} of a smooth del Pezzo variety $\left(X,H\right)$ of dimension $n$ is defined by $\deg X := H^n$. 
\end{mydef}
 \begin{proposition} \cite[Prop.~3.2.4(ii),~p.~54]{AG5}
 Let $\left(X, H\right)$ be a smooth del Pezzo variety. Put $n:= \dim X$ and $d := \deg X$. If $d \geq 3$, then the linear system $|H|$ determines an embedding $\phi_{|H|}: X \hookrightarrow \mathbb{P}^{n+d-2}$.
 \end{proposition}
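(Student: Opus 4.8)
The plan is to argue in three stages, after setting $n=\dim X$, $d=\deg X=H^n$ and recalling $K_X=-(n-1)H$. First I would collect the cohomological vanishings that hold on any del Pezzo variety; then use them, together with a well-chosen elliptic curve section, to compute $h^0(X,H)=n+d-1$ and to see that $|H|$ is base-point-free; and finally upgrade the resulting morphism $\phi_{|H|}$ to a closed immersion. The last stage is where the real obstacle lies.

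\emph{Vanishings.} Because $\mathcal{O}_X\cong\mathcal{O}_X(K_X+(n-1)H)$ and $\mathcal{O}_X(H)\cong\mathcal{O}_X(K_X+nH)$ with $(n-1)H$ and $nH$ ample, Kodaira vanishing gives $H^i(X,\mathcal{O}_X)=H^i(X,\mathcal{O}_X(H))=0$ for all $i>0$. Moreover, for $1\le q\le n-2$, Serre duality together with Kodaira vanishing gives $H^i(X,\mathcal{O}_X(-qH))=0$ for $i<n$, while $H^n(X,\mathcal{O}_X(-qH))\cong H^0(X,\mathcal{O}_X((q-n+1)H))^\vee=0$ since $(q-n+1)H$ is anti-ample; so $\mathcal{O}_X(-qH)$ is acyclic throughout this range.

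\emph{The linear system $|H|$.} Granting for a moment that $|H|$ is base-point-free, I would take $C=Y_1\cap\dots\cap Y_{n-1}$ a complete intersection of $n-1$ general members of $|H|$; by Bertini it is a smooth irreducible curve, and iterated adjunction gives $K_C=(K_X+(n-1)H)|_C=\mathcal{O}_C$, so $C$ is elliptic with $\deg(H|_C)=H^n=d$. Twisting the Koszul resolution of $\mathcal{I}_C$ by $\mathcal{O}_X(H)$ produces a resolution whose $p$-th term is $\mathcal{O}_X((1-p)H)^{\oplus\binom{n-1}{p}}$ for $1\le p\le n-1$: the $p=1$ term is $\mathcal{O}_X^{\oplus(n-1)}$, and every term with $p\ge 2$ is acyclic by the previous paragraph, so $H^q(X,\mathcal{I}_C(H))=0$ for all $q\ge 1$ and $h^0(X,\mathcal{I}_C(H))=n-1$. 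Feeding this into $0\to\mathcal{I}_C(H)\to\mathcal{O}_X(H)\to\mathcal{O}_C(H|_C)\to 0$ gives $h^0(X,H)=(n-1)+h^0(C,H|_C)=(n-1)+d$, since $H|_C$ has degree $d>0=2g(C)-2$. To see that $|H|$ is base-point-free --- a standard fact for $d\ge 2$ --- I would blow up a point $x$ with exceptional divisor $E$ on $\sigma\colon\widetilde{X}\to X$: one computes $(\sigma^*H-E)^n=d-1\ge 1$ and, granting that $\sigma^*H-E$ is nef (the one nontrivial input, which is exactly where the structure of del Pezzo varieties is used), Kawamata--Viehweg vanishing applied to $\sigma^*H-E=K_{\widetilde{X}}+n(\sigma^*H-E)$ yields $H^1(X,\mathcal{I}_x(H))=0$ and hence $x\notin\operatorname{Bs}|H|$. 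Thus $\phi:=\phi_{|H|}$ is a morphism to $\mathbb{P}^{n+d-2}$ with non-degenerate image $X'$ of dimension $n$.

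\emph{Closed immersion.} Since $\mathcal{O}_X(H)$ is big and has positive degree on every curve, $\phi$ contracts nothing and so is finite; writing $e=\deg\phi$ and $\delta=\deg X'$, we have $e\delta=d$, while a non-degenerate $n$-dimensional subvariety of $\mathbb{P}^{n+d-2}$ has degree at least $\operatorname{codim}X'+1=d-1$, forcing $e=1$. Hence $\phi$ is finite and birational onto $X'$, which therefore has $\deg X'=d=\operatorname{codim}X'+2$. It remains to see that $\phi$ is an isomorphism, i.e.\ that $H^0(X,H)\to H^0(\mathcal{O}_\xi)$ is surjective for every length-$2$ subscheme $\xi\subset X$. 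Restricting to a general complete-intersection surface section $S=Y_1\cap\dots\cap Y_{n-2}$, adjunction makes $(S,H|_S)$ a del Pezzo surface of degree $d\ge 3$, on which $-K_S=H|_S$ is very ample by the classical theory of del Pezzo surfaces, and the same Koszul computation gives $H^1(X,\mathcal{I}_S(H))=0$, so $\phi|_S$ is a closed immersion. The hard part will be that an arbitrary $\xi$ need not lie on such a smooth section, so one must propagate the embedding property to all of $X$; the cleanest way to organise this is through Fujita's classification of polarised varieties with $\Delta$-genus $\Delta(X,H)=n+H^n-h^0(X,H)=1$ and $|H|$ base-point-free, which identifies $X'$ as a smooth variety and hence $\phi$ as an isomorphism. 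This yields the embedding $\phi_{|H|}\colon X\hookrightarrow\mathbb{P}^{n+d-2}$, as claimed.
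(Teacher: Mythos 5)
The paper does not actually prove this proposition: it is quoted as a black box from Iskovskikh--Prokhorov \cite[Prop.~3.2.4(ii)]{AG5}, whose proof rests on Fujita's theory of polarised varieties of $\Delta$-genus one. Your outline is a reasonable reconstruction of how such a proof is organised, and the computation $h^0(X,H)=n+d-1$ via the Koszul resolution of the elliptic curve section is fine as far as it goes, but the two genuinely hard steps are left unproved. First, base-point-freeness: you reduce it to the nefness of $\sigma^*H-E$ on the blow-up of a point and explicitly ``grant'' that nefness. This is not a formality --- it amounts to showing that every irreducible curve $C$ through $x$ satisfies $H\cdot C\ge\operatorname{mult}_x C$, which is precisely where the structure theory of $\Delta$-genus-one varieties enters; without it the Kawamata--Viehweg argument has no input. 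Note also that your dimension count already presupposes base-point-freeness (you need Bertini to produce the smooth complete-intersection curve $C$), so the logical order matters.

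Second, and more seriously, the very ampleness. Having shown that $\phi_{|H|}$ is finite and birational onto its image, you cannot conclude it is an isomorphism without knowing the image is normal, and you resolve this by invoking ``Fujita's classification of polarised varieties with $\Delta(X,H)=1$'' --- but that classification \emph{is} the content of the cited proposition, so as written the final step is circular. A self-contained argument would have to carry out Fujita's ladder analysis (descending through hyperplane sections to the surface or curve case and lifting very ampleness back up using the $H^1$-vanishings you established), or else appeal to the explicit classification of del Pezzo varieties. Since the paper only ever uses this statement as a citation, that remains the honest option; your sketch, as it stands, does not replace it.
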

 \begin{note}
 This result tells us that, for $d \geq 3$, we can identify the divisor class $H$ with the class of hyperplane sections under the embedding induced by $|H|$.
 \end{note}
 A complete classification of del Pezzo varieties of dimension $n \geq 3$ arising from the work of Fujita and Iskovskikh can be found in \cite[Thm.~3.3.1,~p.~55]{AG5}.
 \begin{proof}[Proof of Theorem \ref{thm3}]
We induct on $n = \dim X$. The base case $n=2$ is Theorem \ref{thm2}, so assume that $n \geq 3$.

First suppose that $d \geq 4$. 
Identify $X$ with its image as a projective variety in $\mathbb{P}^{n+d-2}$ under the embedding induced by $|H|$. Let $P \in X\left(k\right)$, and denote by $\Lambda\left(P\right)$ the linear system of divisors $D \in |H|$ with $P \in D$. This linear system is of dimension $n+d-3$, corresponding to a hyperplane in $\mathbb{P}\left(\mathscr{L}\left(H\right)\right)$.

Now, an element $D \in \Lambda\left(P\right)$ is a hyperplane section of $X$ through $P$, and by Bertini's theorem, a general element of $\Lambda\left(P\right)$ is smooth away from $P$, hence it is smooth if and only if the corresponding hyperplane is not tangent to $X$ at $P$. Since $X$ is smooth, there is a unique hyperplane tangent to $X$ at $P$, so we conclude that the general element of $\Lambda\left(P\right)$ is smooth. Further, note that, for a smooth divisor $D \in |H|$, the adjunction formula in the form $K_D = D \cdot \left(K_X + D\right)$ gives us $K_D = -\left(n-2\right)D \cdot D$, so $\left(D, H|_D\right)$ is a smooth del Pezzo surface of dimension $n-1$ and degree $\left(H|_D\right)^{n-1} = H \cdot H^{n-1} = H^n = d$. Thus the smooth elements of $|H|$ are del Pezzo varieties of dimension $n-1$ and degree $d$.

Choose two smooth elements $D_1, D_2 \in \Lambda\left(P\right)$, and denote by $\Pi\left(P\right)$ the pencil generated by $D_1$ and $D_2$. Let $f: X \dashrightarrow \mathbb{P}^1$ denote the rational map induced by the linear system $\Pi\left(P\right)$. Note that the general element of $\Pi\left(P\right)$ is smooth, i.e.\ the pencil has only finitely many singular members, since the smooth divisors in $\Lambda\left(P\right)$ correspond to a dense open subset and $\Pi\left(P\right)$ corresponds to a line intersecting the aforementioned open subset. By resolution of singularities (see \cite[Main~Thm.~II,~p.~142]{RSAVOFCZ}), we obtain a variety $\widetilde{X}$ birational to $X$ and a morphism $\phi: \widetilde{X} \rightarrow \mathbb{P}^1$ extending $f$, whose smooth fibres are strict transforms of smooth elements of $\Pi\left(P\right)$ under the associated blow-up.

Now, a divisor is birational to its strict transform under a blow-up, and the Lang--Nishimura theorem \cite[Thm.~3.6.11,~p.~92]{RPV} states that having a smooth $k$-point is a birational invariant, hence all but finitely many of the fibres of $\phi$ (i.e.\ all those fibres corresponding to the strict transform of a smooth divisor of $\Pi\left(P\right)$) are birational to a smooth del Pezzo variety of dimension $n-1$ and degree $d \geq 4$ with a rational point. By the inductive hypothesis, these fibres have the Hilbert property, hence, by Theorem \ref{ovhttheorem}, we see that $X$ has the Hilbert property.

It only remains to consider the case where $d=3$ and there exists a line $L$ on $X$.
Denote by $\Lambda\left(L\right)$ the linear system of divisors $D \in |H|$ with $L \subset D$. This is a linear system of dimension $n-1 \geq 2$, corresponding to a codimension-$2$ linear variety in $\mathbb{P}\left(\mathscr{L}\left(H\right)\right)$. By Bertini's theorem, a general element of $\Lambda\left(L\right)$ is smooth away from $L$, hence it is smooth if and only if the corresponding hyperplane is not tangent to $X$ at any point $Q \in L$. Since $X$ is smooth, there is a unique hyperplane tangent to $X$ at each $Q \in L$, so the general element of $\Lambda\left(L\right)$ is smooth. As above, we conclude that the smooth divisors in $\Lambda\left(L\right)$ are del Pezzo varieties of dimension $n-1$ and degree $3$ containing the line $L$, and, defining $\Pi\left(P\right)$ to be the pencil generated by two smooth divisors in $\Lambda\left(P\right)$, the general element of this pencil is smooth. Further, since $L\left(k\right) \neq \emptyset$, the smooth elements have a rational point. Then, proceeding as above, we conclude that $X$ has the Hilbert property.
\end{proof}
\subsection*{Acknowledgements} \label{subsectionA}
I am sincerely thankful to my supervisor Daniel Loughran for his keen insights and tireless support. I would also like to thank Julian Demeio for pointing out an important flaw in the original draft of this paper. Finally, I would like to thank Jean-Louis Colliot-Th\'el\`ene for drawing my attention to the work of Salberger and Skorobogatov \cite{WASDTQF} and the observation in Remark \ref{cbrem}.
%%%%%%%%%%%%%%%%%%%%%%%%%%%%%%%%%%%%%%%%%%%%%%%%%%%%%%%%%%%%%%%%%%%%%%%%%%%%%%

%%%%%%%%%%%%%%%%%%%%%%%%%%%%%%%%%%%%%%%%%%%%%%%%%%%%%%%%%%%%%%%%%%%%%%%%%%%%%%%
%%%%%%%%%%%%%%%%%%%%%%%%%%%%%%%%%%%%%%%%%%%%%%%%%%%%%%%%%%%%%%%%%%%%%%%%%%%%%%%
\end{document}